\documentclass[12pt,twoside]{article}
\topmargin=-1cm
\textheight=21.5cm
\textwidth=16.8cm

\usepackage{amsmath,amssymb}
\usepackage{graphicx}
\usepackage[footnotesize]{subfigure}
\usepackage[footnotesize]{caption2}
\usepackage{authblk}
\usepackage{amsthm}

\allowdisplaybreaks[4]

\setlength{\topsep}{1pt plus1pt minus1pt}
\setlength{\partopsep}{1pt plus1pt minus1pt}
\setlength{\itemsep}{1pt plus 1pt minus 1pt}
\setlength{\floatsep}{1pt plus 1pt minus 1pt}
\setlength{\abovecaptionskip}{1pt plus1pt minus1pt}
\setlength{\belowcaptionskip}{1pt plus1pt minus1pt}

\newtheorem{lemma}{{\bf Lemma}}[section]

\newtheorem{theorem}{{\bf Theorem}}[section]

\newtheorem{corollary}{{\bf Corollary}}[section]

\makeatletter
\oddsidemargin 0.0in
\evensidemargin
\oddsidemargin
\marginparwidth 0.0in
\makeatother

\baselineskip 15pt
\parskip 10pt

\pagestyle{myheadings}

\markboth{$~$ \hfill {\rm S. Chen, J. Duan, J. Li} \hfill $~$} {$~$ \hfill
{\rm Effective reduction} \hfill$~$}


\title{
\Large\bf  Effective reduction of  a three-dimensional circadian\\ oscillator model}
\author{{\sc Shuang Chen$^{a,b}$}, {\sc Jinqiao Duan$^{c}$}, {\sc Ji Li$^{a}$}
\\
{\small $^{a}$ School of Mathematics and Statistics, Huazhong University of Sciences and Technology}\\
{\small Wuhan, Hubei 430074, P. R. China}\\
{\small $^{b}$ Center for Mathematical Sciences, Huazhong University of Sciences and Technology}\\
{\small Wuhan, Hubei 430074, P. R. China}\\
{\small $^{c}$ Department of Applied Mathematics, Illinois Institute of Technology,}\\
{\small Chicago, IL 60616, USA}
}

\date{}

\begin{document}
\maketitle
\begin{abstract}

We investigate the dynamics of a three-dimensional system modeling a molecular mechanism for
the circadian rhythm in Drosophila.
We first prove the existence of a compact attractor in the region with biological meaning.
Under the assumption that the dimerization reactions are fast,
in this attractor we reduce the three-dimensional system
to a simpler two-dimensional system on the persistent normally hyperbolic slow manifold.

\vskip 0.2cm
{\bf Keywords}:
Geometric singular perturbation theory; circadian oscillator; slow manifold;
reduced model.
\vskip 0.2cm
{\bf AMS(2020) Subject Classification}: 34C45; 34A26; 34C20.
\end{abstract}
\baselineskip 15pt
\parskip 10pt

\thispagestyle{empty}
\setcounter{page}{1}


\section{Introduction}\label{sec-intr}

\setcounter{theorem}{0}

Circadian oscillators  display the rhythms of physiology     with a period of about 24 hours. They have been widely found in biological systems and organisms,  such as   fruit flies, plants,  and invertebrate and vertebrate animals
  \cite{Dunlap-99,Gonze-11}.
Much effort has been devoted  to understanding the mechanisms for these oscillations,
and we refer to several excellent references \cite{Forger-17,Keener-Sneyd-98, Rubin-Terman}.

Based on dimerization and proteolysis of PER and TIM proteins in Drosophila,
Tyson et al. \cite{Tyson-etal-99}   set up a simple circadian oscillator model,
which is governed by the following three-dimensional system
\begin{subequations}
\label{3D-model-1}
\begin{align}
\frac{d M}{d t} &= \frac{\nu_m}{1+(P_2/P_c)^{2}}-k_m M,
\label{3D-model-1-1}
\\
\frac{d P_1}{d t} &= \nu_p M-\frac{k_1P_1}{J_p+P_1+rP_2}-k_3P_1-2k_a P_1^{2}+2k_d P_2,
\label{3D-model-1-2}
\\
\frac{d P_2}{d t} &= k_a P_1^{2}-k_d P_2-\frac{k_2P_2}{J_p+P_1+rP_2}-k_3P_2,
\label{3D-model-1-3}
\end{align}
\end{subequations}
where the system states $M$, $P_{1}$ and $P_{2}$ denote the concentration of mRNA, monomer and dimer, respectively. The system parameters are:
$v_{m}$ is the maximum rate of synthesis of mRNA,
$k_{m}$ is the first-order rate degradation of mRNA,
$P_{c}$ is the value of dimer at the half-maximum transcription rate,
$v_{p}$ is the rate for translation of mRNA into the monomer,
$k_{1}$ (resp. $k_{2}$) is the maximum rate for monomer (resp. dimer) phosphorylation,
$k_{3}$ is the first-order rate degradation of the monomer and dimer,
$J_{P}$ is the Michaelis constant for protein kinase DBT,
$k_{a}$ is the rate of dimerization,
$k_{d}$ is the rate of dissociation of the dimer, and finally,
$r$ is the ratio of enzyme-substrate dissociation constants for the monomer and dimer
(see \cite[Table 1]{Tyson-etal-99}).
The mechanism for model (\ref{3D-model-1}) is shown in Figure \ref{fig-CR}.
\begin{figure}
  \centering
  \includegraphics[width=12.4cm]{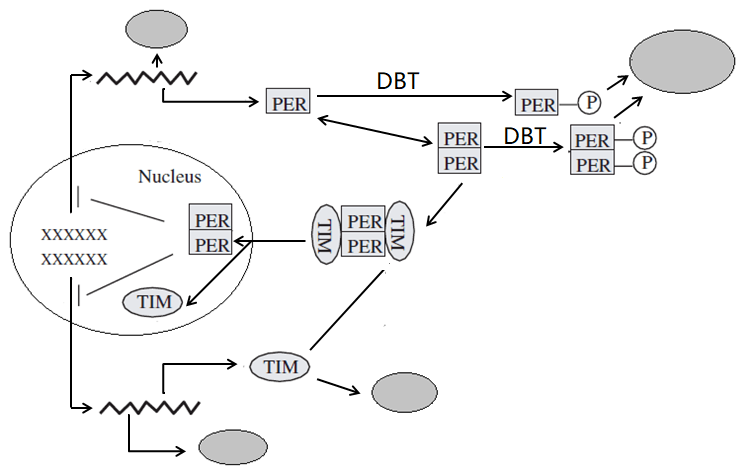}
  \caption{The mechanism for the circadian oscillator model (\ref{3D-model-1}).
  Adapted from \cite{Tyson-etal-99}.}\label{fig-CR}
\end{figure}
Here as in \cite{Tyson-etal-99},
we assume that the ratio $r$ is chosen to be $2$ and monomer is phosphorylated rapidly than dimer, that is, $k_{1}>k_{2}$.
Let $P=P_1+2 P_2$ denote the total protein.
Then system (\ref{3D-model-1}) is converted into the following system
\begin{subequations}
\label{3D-model-2}
\begin{align}
\frac{d M}{d t} &= \frac{4\nu_m P_{c}^{2}}{4P_{c}^{2}+(P-P_1)^{2}}-k_m M,
\label{3D-model-2-1}
\\
\frac{d P}{d t} &= \nu_p M-\frac{(k_1-k_2)P_1+k_2P}{J_p+P}-k_3P,
\label{3D-model-2-2}
\\
\frac{d P_1}{d t} &= \nu_p M-\frac{k_1P_1}{J_p+P}-k_3P_1-2k_a P_1^{2}-k_d P_1+k_dP.
\label{3D-model-2-3}
\end{align}
\end{subequations}
We observe that  the set $\mathcal{A}_{+}:=\left\{(M,P,P_1)\in \mathbb{R}^{3}: M\geq 0, P\geq P_1\geq 0\right\}$
is  the region with biological meaning.

To understand the dynamical behaviours of high-dimensional models arising from  biological, chemical and physical phenomena,
such as the three-dimensional system (\ref{3D-model-2}),
the method of lower dimensional reduction is effective in simplifying these models.
Several methods have been  recently developed to treat the problem of   model reduction,
for example,   the quasi-steady-state approximation \cite{Boieetal-16,Goeke-Walcher-Zerz-15},
computational singular perturbation \cite{Goussis-Najm-06,McMillen-etal-02}, and
intrinsic low-dimensional manifold reduction  \cite{Kaper-02,Maas-Pope}.
Under the assumption that
the rates $k_{a}$ and $k_{d}$ are large compared to other rate parameters,
Tyson et al. \cite{Tyson-etal-99}   applied the       quasi-steady-state approximation
to reduce the three-dimensional system (\ref{3D-model-2}) into  a two-dimensional approximation system
\begin{subequations}
\label{2D-model-1}
\begin{align}
\frac{d M}{d t} &= \frac{4\nu_m P_{c}^{2}}{4P_{c}^{2}+(P-h(P))^{2}}-k_m M,
\label{2D-model-1-1}
\\
\frac{d P}{d t} &= \nu_p M-\frac{(k_1-k_2)h(P)+k_2P}{J_p+P}-k_3P,
\label{2D-model-1-2}
\end{align}
\end{subequations}
where the function $h$ is given by $h(P)=(\sqrt{1+8KP}-1)/(4K)$ for $P\geq 0$
and the constant $K=k_{a}/k_{d}$.
However,
it is explained in \cite{Boieetal-16} that the reduced model obtained by the quasi-steady-state approximation
may possess different dynamical behaviours compared to the full system.
The differences between the original system (\ref{3D-model-2}) and the approximation system (\ref{2D-model-1})
were also numerically compared in \cite{Goussis-Najm-06}.

Here our objective is to  investigate the problem of lower dimensional reduction for system (\ref{3D-model-2})
by the {\it Geometric Singular Perturbation Theory} (abbreviated as GSPT).
Under the assumption that
the rates $k_{a}$ and $k_{d}$ are larger than other rate parameters,
we can rewrite system (\ref{3D-model-2}) as a  slow-fast system.
The key step is to deal with the problem that the critical manifold is noncompact.
The remedy is to construct a compact global attractor in the set $\mathcal{A}_{+}$,
which is obtained from an iterative argument.
We further make use of GSPT in this compact attractor and obtain a two-dimensional slow manifold,
the locally stable manifold of which fills the whole compact region,
and is foliated invariantly with base points in the persistent slow manifold.
As a result, the dynamics of three-dimensional system (\ref{3D-model-2}) is governed
by its restriction to the slow manifold.
In addition, the reduced system  is a regular perturbation of the approximation system (\ref{2D-model-1}).
Here our results lead to a significantly different presentation from those in \cite{Tyson-etal-99}.
Our reduction based on GSPT permits the exact form for the reduced system,
instead of an approximation in the form (\ref{2D-model-1}).
On the other hand,
a compact submanifold of the persistent normally hyperbolic slow manifold is
a global attractor in the region with biological sense.
More specifically, the  main result is stated in the following.
\vskip 0.3cm
\renewcommand\thetheorem{A}
\begin{theorem}
\label{thm-2D-reduction}
{\rm (Two-dimensional reduction).}
Let $\nu_m=\varepsilon k_d \widetilde{\nu}_m$, $k_m=\varepsilon k_d \widetilde{k}_m$,
$\nu_p=\varepsilon k_d \widetilde{\nu}_p$, $k_{i}=\varepsilon k_d \widetilde{k}_{i}$, $i=1,2,3$, and $k_{a}=Kk_{d}$.
Assume that  the parameter $\varepsilon$  satisfies
$0<\varepsilon \leq \varepsilon_{0}$ for a sufficiently small $\varepsilon_{0}>0$.
Then the dynamics of the three-dimensional circadian oscillator model (\ref{3D-model-2}) in the set
$\mathcal{A}_{+}=\left\{(M,P,P_1)\in \mathbb{R}^{3}: M\geq 0, P\geq P_1\geq 0\right\}$
is governed by the following two-dimensional system
\begin{subequations}
\label{2D-model-3}
\begin{align}
\frac{d M}{d t} &=
    \frac{4\nu_m P_{c}^{2}}{4P_{c}^{2}+(P-h(P))^{2}}-k_m M
    +\varepsilon\frac{8\nu_m P_{c}^{2}q_{1}(M,P)(P-h(P))}{(4P_{c}^{2}+(P-h(P))^{2})^{2}}+O(\varepsilon^{2}),
\label{2D-model-3-1}
\\
\frac{d P}{d t} &=
     \nu_p M-\frac{(k_1-k_2)h(P)+k_2P}{J_p+P}-k_3P
    +\varepsilon \frac{(k_2-k_1)q_{1}(M,P)}{J_p+P}+O(\varepsilon^{2}),
\label{2D-model-3-2}
\end{align}
\end{subequations}
where the smooth functions $h$ and $q_{1}$ are defined by
\begin{eqnarray}
h(P)\!\!\!&=&\!\!\!\frac{\sqrt{1+8KP}-1}{4K},\nonumber\\
q_{1}(M,P)\!\!\!&=&\!\!\!\frac{8K\widetilde{\nu}_p MP}{(1+8KP)(1+\sqrt{1+8KP})}
           +\frac{8KP^{2}(\widetilde{k}_2-2\widetilde{k}_1-J_p\widetilde{k}_3-\widetilde{k}_3P)}{(J_p+P)(1+8KP)(1+\sqrt{1+8KP})^{2}},
           \label{df-q-1}
\end{eqnarray}
for $M\geq 0$ and $P\geq 0$.
\end{theorem}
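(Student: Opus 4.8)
The plan is to recognise system (\ref{3D-model-2}) as a standard slow--fast system under the prescribed scaling and then invoke Fenichel's geometric singular perturbation theory, the one genuine difficulty being the noncompactness of the critical manifold, which is dealt with by the compact attractor produced earlier in the paper. Substituting $\nu_m=\varepsilon k_d\widetilde\nu_m$, $k_m=\varepsilon k_d\widetilde k_m$, $\nu_p=\varepsilon k_d\widetilde\nu_p$, $k_i=\varepsilon k_d\widetilde k_i$ ($i=1,2,3$) and $k_a=Kk_d$ into (\ref{3D-model-2}), the $M$- and $P$-equations acquire an overall factor $\varepsilon k_d$, while the $P_1$-equation becomes $\dot P_1=k_d\bigl(P-P_1-2KP_1^{2}\bigr)+\varepsilon k_d\bigl[\widetilde\nu_p M-\widetilde k_1 P_1/(J_p+P)-\widetilde k_3 P_1\bigr]$. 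In the slow time $s=\varepsilon k_d t$ this is a standard system with fast variable $P_1$; the critical set is $\{P-P_1-2KP_1^{2}=0\}$, whose biologically meaningful branch is the graph $P_1=h(P)$, and the layer equation $P_1'=P-P_1-2KP_1^{2}$ has linearisation $-(1+4Kh(P))=-\sqrt{1+8KP}<0$ along it. Hence the critical manifold $\mathcal M_0=\{P_1=h(P)\}$ is normally hyperbolic and attracting, but noncompact over $\{M\ge0,\ P\ge0\}$.

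To make Fenichel's theory applicable I would restrict to the compact global attractor $\mathcal A\subset\mathcal A_+$ obtained in the previous section: enlarging $\mathcal A$ slightly, modifying the vector field outside a neighbourhood of $\mathcal A$ in the usual way, and intersecting $\mathcal M_0$ with a compact neighbourhood of the projection of $\mathcal A$ onto the $(M,P)$-plane produces a compact normally hyperbolic invariant manifold-with-boundary $\mathcal M_0^{c}\subset\mathcal M_0$. Fenichel's theorems then give, for $0<\varepsilon\le\varepsilon_0$, a locally invariant slow manifold $\mathcal M_\varepsilon=\{P_1=\Phi(M,P,\varepsilon)\}$ that is $C^{r}$ jointly in $(M,P,\varepsilon)$ and $O(\varepsilon)$-close to $\mathcal M_0^{c}$, together with its invariant, exponentially attracting stable foliation whose leaves fill a full neighbourhood of $\mathcal M_\varepsilon$. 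Since $\mathcal A$ is forward invariant and lies in this neighbourhood, every orbit of (\ref{3D-model-2}) starting in $\mathcal A_+$ enters $\mathcal A$, is exponentially asymptotic to $\mathcal M_\varepsilon$, and shadows the orbit through its base point; thus the asymptotic dynamics in $\mathcal A_+$ is faithfully described by the two-dimensional flow obtained by restricting the vector field to $\mathcal M_\varepsilon$, i.e. by replacing $P_1$ with $\Phi$ in (\ref{3D-model-2-1})--(\ref{3D-model-2-2}).

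It remains to compute that restriction to the stated order in $\varepsilon$. I would look for $\Phi(M,P,\varepsilon)=h(P)+\varepsilon q_1(M,P)+O(\varepsilon^{2})$ and impose invariance, $\dot P_1=\Phi_M\dot M+\Phi_P\dot P$ on $\mathcal M_\varepsilon$. Using $\dot M,\dot P=O(\varepsilon)$, $\Phi_M=O(\varepsilon)$, $h'(P)=1/\sqrt{1+8KP}$, $1+4Kh(P)=\sqrt{1+8KP}$, and the defining identity $P-h(P)-2Kh(P)^{2}\equiv0$ (which annihilates the $O(1)$ term), the $O(\varepsilon)$ balance becomes, with $h=h(P)$,
\[
\widetilde\nu_p M-\frac{\widetilde k_1 h}{J_p+P}-\widetilde k_3 h-\sqrt{1+8KP}\,q_1
=\frac{1}{\sqrt{1+8KP}}\left[\widetilde\nu_p M-\frac{(\widetilde k_1-\widetilde k_2)h+\widetilde k_2 P}{J_p+P}-\widetilde k_3 P\right].
\]
Solving for $q_1$ and simplifying with $\sqrt{1+8KP}-1=8KP/(1+\sqrt{1+8KP})$ and $h(P)=2KP/(1+\sqrt{1+8KP})$ gives the closed form (\ref{df-q-1}). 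Finally, inserting $P_1=h(P)+\varepsilon q_1(M,P)+O(\varepsilon^{2})$ into (\ref{3D-model-2-1})--(\ref{3D-model-2-2}) and Taylor-expanding --- in particular $\bigl(4P_c^{2}+(P-P_1)^{2}\bigr)^{-1}=\bigl(4P_c^{2}+(P-h)^{2}\bigr)^{-1}+\varepsilon\,2(P-h)q_1\bigl(4P_c^{2}+(P-h)^{2}\bigr)^{-2}+O(\varepsilon^{2})$ --- yields exactly (\ref{2D-model-3}).

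The main obstacle is the passage from the noncompact critical manifold to a setting where Fenichel's theorem genuinely applies: one must check that the compact attractor can be enclosed in a compact (overflowing- or inflowing-) invariant extension of $\mathcal M_0$ after a harmless modification of the vector field, and that the global-attractor property combined with the stable foliation upgrades the usual finite-time closeness of GSPT into the asserted statement that the $3$-D dynamics in $\mathcal A_+$ is \emph{governed} by the $2$-D reduced system. The subsequent perturbative derivation of $q_1$ and of (\ref{2D-model-3}) is then a routine expansion.
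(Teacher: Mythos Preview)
Your proposal is correct and mirrors the paper's argument: rewrite (\ref{3D-model-2}) as a slow--fast system in the time $\widetilde\tau=k_d t$, use the compact attractor $\mathcal{A}^{1}_{+}$ of Lemma~\ref{lm-p-inv-2} to restrict to a compact normally hyperbolic piece of $\{P_1=h(P)\}$, apply Fenichel to obtain $\mathcal{M}_\varepsilon=\{P_1=h(P)+\varepsilon q_1+O(\varepsilon^{2})\}$ via exactly the invariance equation you wrote, and substitute back into the slow equations. The one point you rightly flag at the end --- that the box $\mathcal{A}^{1}_{+}$ does not literally sit inside a thin Fenichel neighbourhood of $\mathcal{M}_\varepsilon$ --- is closed in the paper (Lemma~\ref{prop-2D-reduc}) just as you would expect, by using the fast equation (\ref{3D-model-3-3}) to drive every orbit in $\mathcal{A}^{1}_{+}$ into that neighbourhood in finite time, after which the stable foliation gives the exponential tracking.
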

Theorem \ref{thm-2D-reduction} shows that
the effective dynamics of system (\ref{3D-model-2}) with sufficiently small $\varepsilon$  can be obtained
by investigating the reduced system (\ref{2D-model-3}).
Note that the reduced system (\ref{2D-model-3}) with sufficiently small $\varepsilon$ is
a regular perturbation of the approximation system (\ref{2D-model-1}),
then we can  obtain the partial dynamics of three-dimensional system (\ref{3D-model-2}) in the view of regular perturbation theory
(see, for instance, \cite{Chow-Hale-82}).

This paper is organized as follows.
In section \ref{sec-GSPT},
we recall several fundamental results on GSPT as preparations.
In section \ref{sec-reduct},
we construct a compact attractor and apply GSPT to realize the two-dimensional reduction of system (\ref{3D-model-2}).
We give some remarks in the final section.

\section{Geometric singular perturbation theory}
\label{sec-GSPT}

In this section, we introduce some basics of GSPT as preparations.
GSPT was laid by Fenichel \cite{Fenichel-79} in 1979,
which was based on the theory of normally hyperbolic invariant manifolds obtained
by Fenichel's series works \cite{Fenichel-71,Fenichel-74,Fenichel-77}.
Since then, GSPT acting as an important geometric method to deal with the singular perturbation problems
has been extended to a wider research field including
blow-up technique \cite{Dumortieretal-Roussarie-96,Krupa-Szmolyan-01SIMA,Krupa-Szmolyan-01JDE},
exchange lemma \cite{Deng-89,Jones-96,Liu-00,Schecter1-08,Schecter2-08} and so on.
GSPT also provides an approach to explain many phenomena in actual applications,
for example,  in population dynamics \cite{Deng-03,Li-Zhu-13},
cellular physiology \cite{Keener-Sneyd-98,Kosiuk-Szmolyan-16,Rubin-Terman}
and  fluid mechanics \cite{DuLiLi-18}.
For more information on GSPT,
we refer to good references \cite{Hek-2010,Jones-95,Kuehn-15,Wiggins-94}.

Singular perturbation problems usually admit a clear separation in different time scales.
A standard slow-fast system with two time scales is in the following form
\begin{subequations}
\label{slow-1}
\begin{align}
\varepsilon\frac{d x}{d \tau} &= \varepsilon \dot x  = f(x,y,\varepsilon),
\label{slow-1-1}
\\
\frac{d y}{d \tau} &= \dot y  = g(x,y,\varepsilon),
\label{slow-1-2}
\end{align}
\end{subequations}
where $(x,y)\in \mathbb{R}^{n}\times\mathbb{R}^{m}$, $\varepsilon$ is a real parameter with $0<|\varepsilon|\ll 1$,
and the functions $f: \mathbb{R}^{n+m+1}\to \mathbb{R}^{n}$ and
$g: \mathbb{R}^{n+m+1}\to \mathbb{R}^{m}$ are sufficiently smooth on $\mathbb{R}^{n+m+1}$.
System (\ref{slow-1}) can be reformulated with a change of time scale $\tau=\varepsilon t$ as
\begin{subequations}
\label{fast-1}
\begin{align}
\frac{d x}{d t} &= x'  = f(x,y,\varepsilon),
\label{fast-1-1}
\\
\frac{d y}{d t}&= y'  = \varepsilon g(x,y,\varepsilon).
\label{fast-1-2}
\end{align}
\end{subequations}
The time scale $\tau$ is said to be slow whereas that for $t$ is fast.
Then we refer to system (\ref{slow-1}) as  {\it the slow system} and to system (\ref{fast-1}) as {\it the fast system}.
For $\varepsilon\neq 0$, systems (\ref{slow-1}) and (\ref{fast-1}) are equivalent.
To obtain the dynamics of system (\ref{slow-1}) or (\ref{fast-1}) for sufficiently small $|\varepsilon|$,
we  consider the limits for both systems as $\varepsilon \to 0$.
Letting $\varepsilon\to 0$ in system (\ref{slow-1}) yields a differential-algebraic system
\begin{subequations}
\label{reduce-1}
\begin{align}
0&=  f(x,y,0),
\label{reduce-1-1}
\\
\dot y  & = g(x,y,0),
\label{reduce-1-2}
\end{align}
\end{subequations}
which is called {\it the reduced system},
and letting $\varepsilon \to 0$ in system (\ref{fast-1}) yields a system given by
\begin{subequations}
\label{layer-1}
\begin{align}
x'  &= f(x,y,0),
\label{layer-1-1}
\\
y'  &= 0,
\label{layer-1-2}
\end{align}
\end{subequations}
which is called {\it the layer problem}.
In system (\ref{layer-1}) the variable $x$ evolves while the variable $y$ can be viewed as a parameter.
Then we refer to $x$ as {\it the fast variable} and  to $y$ as {\it the slow variable}.
The phase state of system (\ref{reduce-1}) is defined on the set of singular points of system (\ref{layer-1}),
that is, the set $\mathcal{C}_{0}:=\{(x,y)\in \mathbb{R}^{n}\times\mathbb{R}^{m}: f(x,y,0)=0\}$.
This set is called  {\it the critical set}.
Additionally, if it is a submanifold of $\mathbb{R}^{n}\times\mathbb{R}^{m}$,
then it is called the {\it critical manifold}.

A compact submanifold $\mathcal{M}_0$ (with or without boundary) of the set $\mathcal{C}_{0}$
is said to be {\it normally hyperbolic} \cite{Fenichel-79}
if the Jacobian matrix $\partial f/ \partial x$ at each point in $\mathcal{M}_0$ has no eigenvalues on the imaginary axis.
Theorem 9.1 in \cite{Fenichel-79} implies that for a compact normally hyperbolic invariant manifold $\mathcal{M}_0$,
there exists a {\it slow manifold} $\mathcal{M}_{\varepsilon}$ of system (\ref{fast-1}),
which lies within $O(\varepsilon)$ of $\mathcal{M}_{0}$  in the $C^{1}$ topology.
Moreover, the slow manifold $\mathcal{M}_{\varepsilon}$ is {\it locally invariant} under the flow (\ref{fast-1}).
A set $U$ is called to be {\it locally invariant} under the flow (\ref{fast-1})
if it has a neighborhood $V$ so that no trajectories can leave the set $U$ without also leaving $V$.
The slow manifold $\mathcal{M}_{\varepsilon}$ possesses
a locally stable manifold $\mathcal{W}^{s}_{\varepsilon}(\mathcal{M}_{\varepsilon})$
and a locally unstable manifold $\mathcal{W}^{u}_{\varepsilon}(\mathcal{M}_{\varepsilon})$,
which respectively lie within $O(\varepsilon)$, in the $C^{1}$ topology, of the locally stable manifold $\mathcal{W}^{s}_{0}(\mathcal{M}_{0})$
and the locally  unstable manifold $\mathcal{W}^{u}_{0}(\mathcal{M}_{0})$ of the critical manifold $\mathcal{M}_{0}$.
The locally stable manifold $\mathcal{W}^{s}_{\varepsilon}(\mathcal{M}_{\varepsilon})$
and the locally unstable manifold $\mathcal{W}^{u}_{\varepsilon}(\mathcal{M}_{\varepsilon})$
are respectively foliated by the fibers with base points in the slow manifold $\mathcal{M}_{\varepsilon}$.

\section{Reduction of the 3D circadian oscillator model}
\label{sec-reduct}

In this section,
we first establish the existence of a compact attractor in the region with biological meaning.
Then  under the assumption that
the parameters $k_a$ and $k_d$ are sufficiently large compared to other rate constants in system (\ref{3D-model-2}),
we reduce three-dimensional system (\ref{3D-model-2}) to a two-dimensional system via GSPT.

\subsection{Positive invariant sets and attraction}

As preliminaries, we first consider the positive invariant sets of systems (\ref{3D-model-1}) and (\ref{3D-model-2}).
Further, the detailed study on the attraction of a positive invariant set for system (\ref{3D-model-2}) is given.
The similar result for system (\ref{3D-model-1}) can be obtained by taking the transformation $M=M$, $P_1=P_1$ and $P=P_1+2P_2$.
Let $\{\widetilde{\Phi}_{t}: t\in \mathbb{R}\}$ (resp. $\{\Psi_{t}: t\in \mathbb{R}\}$) denote the flow of
system (\ref{3D-model-1}) (resp. (\ref{3D-model-2})).
A set  $\Lambda\subset \mathbb{R}^{3}$ is called the positive invariant set of system (\ref{3D-model-1}) (resp. (\ref{3D-model-2}))
if $\widetilde{\Phi}_{t}\Lambda \subset \Lambda $ (resp. $\Psi_{t} \Lambda \subset \Lambda$) for all $t\geq 0$.
The statement on the positive invariant set of system (\ref{3D-model-1}) is stated in the next lemma.
\begin{lemma}\label{lm-p-inv-1}
The cone $\mathbb{R}^{3}_{+}:=\{(M,P_1,P_2)\in \mathbb{R}^{3}: M\geq 0, P_1\geq 0, P_2 \geq 0\}$
is a positive invariant set of system (\ref{3D-model-1}).
\end{lemma}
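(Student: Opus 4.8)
The plan is to verify that at every boundary point of the cone the vector field of (\ref{3D-model-1}) never points strictly out of $\mathbb{R}^{3}_{+}$, and then to conclude positive invariance from the standard flow-invariance criterion for the positive orthant (Nagumo's theorem). As a preliminary I would note that on an open neighborhood of the closed cone $\mathbb{R}^{3}_{+}$ the right-hand side of (\ref{3D-model-1}) is $C^{\infty}$: there the denominator $J_{p}+P_{1}+rP_{2}$ stays bounded below by $J_{p}>0$, so all the rational terms are smooth and solutions exist and are unique locally. It therefore suffices to show that no trajectory starting in $\mathbb{R}^{3}_{+}$ can leave it.

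The decisive step is the sign inspection on the three faces of $\partial\mathbb{R}^{3}_{+}$. On $\{M=0\}$ one has $\dot M=\nu_{m}/(1+(P_{2}/P_{c})^{2})>0$. On $\{P_{1}=0\}$ every term carrying a factor $P_{1}$ or $P_{1}^{2}$ vanishes (the rational term $-k_{1}P_{1}/(J_{p}+P_{1}+rP_{2})$ in particular has a strictly positive denominator and a zero numerator there), leaving $\dot P_{1}=\nu_{p}M+2k_{d}P_{2}\geq 0$ since $M,P_{2}\geq 0$ on the cone. On $\{P_{2}=0\}$ only the dimerization term survives and $\dot P_{2}=k_{a}P_{1}^{2}\geq 0$. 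Thus at any boundary point the components of the field in the directions of the vanishing coordinates are nonnegative, i.e.\ the field lies in the polyhedral tangent cone of $\mathbb{R}^{3}_{+}$ at that point; Nagumo's invariance theorem then gives $\widetilde{\Phi}_{t}\mathbb{R}^{3}_{+}\subset\mathbb{R}^{3}_{+}$ for all $t\geq 0$.

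The only subtle point — and the place where an argument based purely on the strict sign of the first derivative would stall — is the behavior on the edges and at the vertex of the cone, where the inward inequalities degenerate to equalities (for instance $\dot P_{2}=k_{a}P_{1}^{2}=0$ as soon as $P_{1}=P_{2}=0$). Invoking Nagumo's theorem, whose hypothesis is exactly the tangent-cone condition just checked, disposes of these degenerate points uniformly. Alternatively one can give a direct first-exit-time contradiction: $M(t)=e^{-k_{m}t}M(0)+\int_{0}^{t}e^{-k_{m}(t-s)}\nu_{m}(1+(P_{2}(s)/P_{c})^{2})^{-1}\,ds\geq 0$ holds unconditionally, while the analogous variation-of-constants representations for $P_{1}$ and $P_{2}$ (with their solution-dependent, strictly positive linear decay rates) together with a short Taylor expansion of $P_{2}$ at a putative first exit point force all three coordinates to remain nonnegative. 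I expect the bookkeeping at the vertex to be the only mildly delicate part; everything else is a one-line inspection of the right-hand side of (\ref{3D-model-1}).
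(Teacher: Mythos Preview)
Your argument is correct: the sign checks on the three faces are exactly right, the vector field is indeed smooth on a neighborhood of $\mathbb{R}^{3}_{+}$ (since $J_{p}+P_{1}+rP_{2}\geq J_{p}>0$ there), and Nagumo's tangent-cone criterion then yields positive invariance in one stroke, including the degenerate edges and vertex.

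The paper takes a genuinely different route. Rather than invoking Nagumo, it constructs a nested sequence of shifted cones $\mathcal{C}^{(n)}_{+}=\{M\geq M^{(n)},\,P_{1}\geq P_{1}^{(n)},\,P_{2}\geq P_{2}^{(n)}\}$ with $M^{(n)},P_{1}^{(n)},P_{2}^{(n)}$ negative and increasing to $0$, chosen so that on each face of $\mathcal{C}^{(n)}_{+}$ the relevant derivative is \emph{strictly} positive; this makes each $\mathcal{C}^{(n)}_{+}$ positively invariant by the most elementary argument (no trajectory can cross a face where the field points strictly inward), and $\mathbb{R}^{3}_{+}=\bigcap_{n}\mathcal{C}^{(n)}_{+}$ then inherits positive invariance as an intersection. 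Your approach is shorter and more conceptual, leaning on a standard invariance theorem; the paper's approach is entirely self-contained and sidesteps precisely the edge/vertex degeneracy you flagged, at the cost of manufacturing the approximating sequences (which requires a small amount of care to keep the denominator $J_{p}+P_{1}^{(n)}+2P_{2}^{(n)}$ positive and the quadratic term $k_{3}P_{1}^{(n)}+2k_{a}(P_{1}^{(n)})^{2}$ negative).
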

\begin{proof}
We first choose three increasing sequences
$\{M^{(n)}\}_{n=0}^{+\infty}$, $\{P_{1}^{(n)}\}_{n=0}^{+\infty}$ and $\{P_{2}^{(n)}\}_{n=0}^{+\infty}$ in the following way.
Take a sequence $\{P_{1}^{(n)}\}_{n=0}^{+\infty}$
which increases to zero and satisfies $P_{1}^{(0)}<P_{1}^{(n)}<P_{1}^{(n+1)}<0$ for each $n\geq 1$
and $P_{1}^{(0)}\in (\max\{-k_3/(2k_a), -J_{p}\},0)$.
Then
$$k_3P_{1}^{(n)}+2k_a (P_{1}^{(n)})^{2}<0, \ \ \ n\geq 0.$$
By continuity we can take the increasing $M^{(n)}$ and $P_2^{(n)}$
with $M^{(n)}<0$ and $P_2^{(n)}<0$ such that
\begin{eqnarray*}
\nu_p M^{(n)}-k_3P_1^{(n)}-2k_a (P_{1}^{(n)})^{2}+2k_d P_2^{(n)}>0, \ \ \
J_p+P_1^{(n)}+2P_2^{(n)}>0.
\end{eqnarray*}
Since $P_{1}^{(n)}<0$ and $P_{1}^{(n)}\to 0$ as $n\to +\infty$,
then  $M^{(n)}\to 0$ and $P_{2}^{(n)}\to 0$ as $n\to +\infty$.
Define a family of cones $\mathcal{C}^{(n)}_{+}$ by
$$\mathcal{C}^{(n)}_{+}:=\{(M,P_1,P_2)\in \mathbb{R}^{3}: M\geq M^{(n)}, P_1\geq P_{1}^{(n)}, P_2 \geq P_2^{(n)}\}.$$
We claim that the cones $\mathcal{C}^{(n)}_{+}$ are the positive invariant sets of system (\ref{3D-model-1}).
In fact, the field vector of system (\ref{3D-model-1})
on the planes $M=M^{(n)}$, $P_1=P_{1}^{(n)}$ and $P_2=P_2^{(n)}$ satisfies the following inequalities:
\begin{eqnarray*}
\frac{d M}{d t}|_{M=M^{(n)}}
        \!\!&=&\!\!  \frac{\nu_m}{1+(P_2/P_c)^{2}}-k_m M^{(n)}>0,\\
\frac{d P_1}{d t}|_{{M\geq M^{(n)}},P_1=P_{1}^{(n)}}
        \!\!&>&\!\!   \nu_p M^{(n)}-k_3P_1^{(n)}-2k_a (P_{1}^{(n)})^{2}+2k_d P_2^{(n)}>0,\\
\frac{d P_2}{d t}|_{P_1\geq P_{1}^{(n)},P_2=P_2^{(n)}}
        \!\!&=&\!\! k_a (P_{1}^{(n)})^{2}-k_d P_2^{(n)}-\frac{k_2P_2^{(n)}}{J_p+P_1^{(n)}+2P_2^{(n)}}-k_3P_2^{(n)}>0,
\end{eqnarray*}
then there are no solutions of system (\ref{3D-model-1}) starting from $\mathcal{C}^{(n)}_{+}$
and leaving them by crossing their boundaries.
Thus the claim is true.
Recall that $M^{(n)}$, $P_{1}^{(n)}$ and $P_2^{(n)}$ are monotonically increasing to zero,
then  the cones $\mathcal{C}^{(n)}_{+}$ approach to $\mathbb{R}^{3}_{+}$ as $n\to+\infty$.
Thus by the positive invariant property of the cones $\mathcal{C}^{(n)}_{+}$,
the cone  $\mathbb{R}^{3}_{+}$ is also positive invariant.
Therefore, the proof is now complete.
\end{proof}

Note that  $P=P_1+2 P_2$ in system (\ref{3D-model-2}),
then as a direct corollary of Lemma \ref{lm-p-inv-1}
we have the following result on the positive invariant set of system (\ref{3D-model-2}).

\begin{corollary}\label{Cor-p-inv-2}
The cone $\mathcal{A}_{+}=\left\{(M,P,P_1)\in \mathbb{R}^{3}: M\geq 0, P\geq P_1\geq 0\right\}$
is a positive invariant set of system (\ref{3D-model-2}).
\end{corollary}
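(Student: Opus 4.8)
The plan is to obtain Corollary \ref{Cor-p-inv-2} as an immediate consequence of Lemma \ref{lm-p-inv-1}, using that systems (\ref{3D-model-1}) and (\ref{3D-model-2}) differ only by a linear, hence invertible, change of variables. Introduce the linear map $\Phi:\mathbb{R}^{3}\to\mathbb{R}^{3}$, $\Phi(M,P_{1},P_{2})=(M,\,P_{1}+2P_{2},\,P_{1})$, whose inverse is $\Phi^{-1}(M,P,P_{1})=\bigl(M,\,P_{1},\,\tfrac12(P-P_{1})\bigr)$. Since $P=P_{1}+2P_{2}$ and $\tfrac{d}{dt}P=\tfrac{d}{dt}P_{1}+2\tfrac{d}{dt}P_{2}$, substituting $P_{2}=\tfrac12(P-P_{1})$ into (\ref{3D-model-1}) turns it exactly into (\ref{3D-model-2}) --- this is precisely the change of coordinates already performed in Section \ref{sec-intr}. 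Consequently $\Phi$ conjugates the two flows: $\Psi_{t}=\Phi\circ\widetilde{\Phi}_{t}\circ\Phi^{-1}$ for every $t$.

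The second step is to identify $\Phi(\mathbb{R}^{3}_{+})$. A triple $(M,P_{1},P_{2})$ satisfies $M\geq0$, $P_{1}\geq0$, $P_{2}\geq0$ if and only if its image $(M,P,P_{1})$ with $P:=P_{1}+2P_{2}$ satisfies $M\geq0$, $P_{1}\geq0$ and $P\geq P_{1}$; hence $\Phi(\mathbb{R}^{3}_{+})=\mathcal{A}_{+}$, and equivalently $\Phi^{-1}(\mathcal{A}_{+})=\mathbb{R}^{3}_{+}$. Combining the two steps, for each $t\geq0$,
\[
\Psi_{t}\,\mathcal{A}_{+}=\Phi\bigl(\widetilde{\Phi}_{t}(\Phi^{-1}\mathcal{A}_{+})\bigr)=\Phi\bigl(\widetilde{\Phi}_{t}\,\mathbb{R}^{3}_{+}\bigr)\subset\Phi\,\mathbb{R}^{3}_{+}=\mathcal{A}_{+},
\]
where the inclusion is exactly Lemma \ref{lm-p-inv-1}. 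This establishes the positive invariance of $\mathcal{A}_{+}$ under $\{\Psi_{t}\}$.

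There is essentially no obstacle here: the only point needing a little care is that $\Phi$ genuinely carries the vector field of (\ref{3D-model-1}) onto that of (\ref{3D-model-2}), which is the computation already recorded when (\ref{3D-model-1}) was rewritten as (\ref{3D-model-2}), together with the harmless observation that both vector fields are smooth on the relevant cones (since $J_{p}>0$ and $4P_{c}^{2}>0$ keep all denominators positive there). Alternatively one could bypass the conjugacy and argue directly on (\ref{3D-model-2}) by the boundary-flow method of Lemma \ref{lm-p-inv-1}, checking that on $\partial\mathcal{A}_{+}$ the field is non-outward: $\tfrac{d}{dt}M\geq0$ on $\{M=0\}$, $\tfrac{d}{dt}P_{1}\geq0$ on $\{P_{1}=0,\ P\geq0\}$, and $\tfrac{d}{dt}(P-P_{1})=2k_{a}P_{1}^{2}\geq0$ on $\{P=P_{1}\}$, with the corners handled by the same approximation-by-slightly-larger-cones device used in the proof of Lemma \ref{lm-p-inv-1}. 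The conjugacy route is shorter because it reuses that device for free.
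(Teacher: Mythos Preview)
Your proof is correct and follows exactly the paper's approach: the paper states the corollary as a direct consequence of Lemma~\ref{lm-p-inv-1} via the change of variables $P=P_{1}+2P_{2}$, and you have simply written out that conjugacy argument in full detail. The alternative boundary-flow verification you sketch is also valid but, as you note, redundant once Lemma~\ref{lm-p-inv-1} is in hand.
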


We remark that all initial values with biological meaning are in the cone $\mathcal{A}_{+}$.
Thus throughout this paper, we only consider the dynamics of system (\ref{3D-model-2}) in the cone $\mathcal{A}_{+}$.
Let $d:\mathbb{R}^{3}\times \mathbb{R}^{3}\to [0,+\infty)$ denote a metric on $\mathbb{R}^{3}$.
We say that a set $A\subset \mathbb{R}^{3}$ attracts a set $B\subset \mathbb{R}^{3}$ under the flow $\{\Psi_{t}: t\in \mathbb{R}\}$,
if $\lim_{t\to+\infty} dist(\Psi_{t}B, A)=0$,
where $dist(\Psi_{t}B, A)=\sup_{x\in \Psi_{t}B} \inf_{y\in A} d(x,y)$.
We next establish the existence of a smaller attracting set inside $\mathcal{A}_{+}$.

\begin{lemma}\label{lm-p-inv-2}
Let the set $\mathcal{A}^{1}_{+}$ be given by
\begin{eqnarray*}
\mathcal{A}^{1}_{+}\!\!\!&=&\!\!\!
  \left\{(M,P,P_1)\in \mathbb{R}^{3}:
        0\leq M\leq \frac{\nu_{m}}{k_{m}}, \ \
        0\leq P_1 \leq P \leq \frac{\nu_{m}\nu_{p}}{k_3k_{m}}\right\}.
\end{eqnarray*}
Then $\mathcal{A}^{1}_{+}$ attracts $\mathcal{A}_{+}$ under the flow $\{\Psi_{t}: t\in \mathbb{R}\}$,
and the set $\mathcal{A}^{1}_{+}$ is a positive invariant set of system (\ref{3D-model-2}).
\end{lemma}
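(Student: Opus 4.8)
The plan is to exploit the cascaded, quasi-positive structure of system (\ref{3D-model-2}) together with elementary scalar comparison estimates. By Corollary \ref{Cor-p-inv-2} the cone $\mathcal{A}_{+}$ is positively invariant, so it suffices to analyse trajectories along which $M\ge 0$ and $P\ge P_1\ge 0$ for all $t\ge 0$, and to control each coordinate in turn. For the $M$-coordinate, since $(P-P_1)^2\ge 0$ we have $0< 4\nu_m P_c^2/(4P_c^2+(P-P_1)^2)\le \nu_m$, hence the differential inequality $\dot M\le \nu_m-k_m M$ holds along every trajectory in $\mathcal{A}_{+}$. Comparison with the scalar linear equation $\dot u=\nu_m-k_m u$ gives $M(t)\le \nu_m/k_m+\max\{M(0)-\nu_m/k_m,0\}\,e^{-k_m t}$; in particular $\limsup_{t\to\infty}M(t)\le \nu_m/k_m$, and the half-space $\{M\le \nu_m/k_m\}$ is forward invariant.

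Next I would feed this into the $P$-equation. Because $k_1>k_2$ and $0\le P_1\le P$ on $\mathcal{A}_{+}$, the Michaelis--Menten term satisfies $((k_1-k_2)P_1+k_2P)/(J_p+P)\ge 0$, so $\dot P\le \nu_p M-k_3 P$. Given $\eta>0$, choose $T_\eta$ with $M(t)\le \nu_m/k_m+\eta$ for $t\ge T_\eta$; comparing on $[T_\eta,\infty)$ with $\dot v=\nu_p(\nu_m/k_m+\eta)-k_3 v$ yields $\limsup_{t\to\infty}P(t)\le \nu_p(\nu_m/k_m+\eta)/k_3$, and letting $\eta\downarrow 0$ gives $\limsup_{t\to\infty}P(t)\le \nu_m\nu_p/(k_3 k_m)$. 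Since $0\le P_1\le P$, the same bound holds for $P_1$. Combining the three estimates shows $\mathrm{dist}(\Psi_{t}x,\mathcal{A}^{1}_{+})\to 0$ as $t\to\infty$ for every $x\in\mathcal{A}_{+}$, and the same comparison estimates give this convergence uniformly on bounded subsets of $\mathcal{A}_{+}$, which is the asserted attraction.

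For positive invariance of $\mathcal{A}^{1}_{+}$ I would verify that the vector field of (\ref{3D-model-2}) is subtangent to $\mathcal{A}^{1}_{+}$ on each of its boundary faces (Nagumo's criterion). The faces $\{M=0\}$, $\{P_1=0\}$ and $\{P=P_1\}$ lie in $\partial\mathcal{A}_{+}$ and are already covered by Corollary \ref{Cor-p-inv-2}; for the last one this amounts to $\frac{d}{dt}(P-P_1)=2k_aP_1^2\ge 0$ on $\{P=P_1\}$. On the new face $\{M=\nu_m/k_m\}$ one has $\dot M=4\nu_m P_c^2/(4P_c^2+(P-P_1)^2)-\nu_m\le 0$, and on $\{P=\nu_m\nu_p/(k_3 k_m)\}$, where $M\le \nu_m/k_m$ inside $\mathcal{A}^{1}_{+}$, one has $\dot P\le \nu_p M-k_3 P\le \nu_p(\nu_m/k_m)-k_3\bigl(\nu_m\nu_p/(k_3 k_m)\bigr)=0$. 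Hence no trajectory can leave $\mathcal{A}^{1}_{+}$ through its boundary, so $\mathcal{A}^{1}_{+}$ is positively invariant.

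The routine ingredients are the scalar (Gronwall-type) comparison estimates and the face-by-face sign checks. The only step needing genuine care, rather than a real obstacle, is the cascade from $M$ to $P$: the bound on $M$ is merely asymptotic, so it must be inserted with an $\eta$-margin over a shifted time interval, and one must keep track of where the hypotheses $k_1>k_2$ and $0\le P_1\le P$ enter the sign analysis of the Michaelis--Menten term. Beyond this bookkeeping I do not anticipate any difficulty.
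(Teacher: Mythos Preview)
Your proof is correct and follows essentially the same approach as the paper's: the differential inequalities $\dot M\le \nu_m-k_m M$ and $\dot P\le \nu_p M-k_3 P$ handled by Gronwall/comparison for attraction, followed by sign checks on the faces $\{M=\nu_m/k_m\}$ and $\{P=\nu_m\nu_p/(k_3 k_m)\}$ for positive invariance. Your execution is in fact slightly cleaner than the paper's: the $\eta$-margin cascade avoids the paper's case split on whether $k_3=k_m$, and your direct use of $0\le P_1\le P$ to bound $P_1$ replaces the paper's separate multi-case Gronwall estimate on the $P_1$-equation.
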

\begin{proof}
For any initial value $(M(0),P(0),P_1(0))\in \mathcal{A}_{+}$,
let $(M(t),P(t),P_1(t))$ denote the solution of system (\ref{3D-model-2}) with this initial value.
Then by the Corollary \ref{Cor-p-inv-2} we have $M(t)\geq 0$, $P(t)\geq 0$ and $P_1(t)\geq 0$ for all $t\geq 0$.
From (\ref{3D-model-2-1}) it follows that $$\frac{d M}{d t} \leq \nu_m -k_m M.$$
Then by {\it Gronwall's Inequality} we obtain
\begin{eqnarray}\label{est-Mt}
M(t)\leq M(0)e^{-k_m t}+\nu_{m}\int_{0}^{t}e^{k_m(s-t)} ds
    <M(0)e^{-k_m t}+\frac{\nu_{m}}{k_{m}}, \ \ \ t\geq 0.
\end{eqnarray}
Since $P\geq P_{1}$, $\nu_{p}>0$ and $k_{i}>0$ in (\ref{3D-model-2-2}),
then $$\frac{d P}{d t} \leq \nu_p M-k_3P,$$
 which implies
\begin{eqnarray}\label{est-Pt-1}
P(t)\leq P(0)e^{-k_3 t}+\nu_{p}\int_{0}^{t} M(s) e^{k_{3}(s-t)} ds, \ \ \ t\geq 0.
\end{eqnarray}
According to $k_3\neq k_{m}$ and $k_3= k_{m}$,
we give two different estimates for $P(t)$ as follow.
In case $k_3\neq k_{m}$, substituting (\ref{est-Mt}) into (\ref{est-Pt-1}) yields that
\begin{eqnarray}\label{est-Pt-2-1}
P(t) < P(0)e^{-k_3 t}+\frac{\nu_{p}M(0)}{k_3-k_m}\left(e^{-k_mt}-e^{-k_3t}\right)+\frac{\nu_{m}\nu_{p}}{k_3k_m}, \ \ \ t\geq 0.
\end{eqnarray}
Similarly, in case $k_3= k_{m}$,
\begin{eqnarray}\label{est-Pt-2-2}
P(t) < P(0)e^{-k_3 t}+\nu_{p}M(0)te^{-k_3t}+\frac{\nu_{p}\nu_{m}}{(k_3)^{2}}, \ \ \ t\geq 0.
\end{eqnarray}
By (\ref{3D-model-2-3}) we obtain
\begin{eqnarray}\label{est-P1t-1}
P_1(t)\leq P_1(0)e^{-(k_3+k_d)t}+\int_{0}^{t}(\nu_{p}M(s)+k_dP(s))e^{(k_3+k_d)(s-t)} ds, \ \ \ t\geq 0.
\end{eqnarray}
Then substituting (\ref{est-Mt}), (\ref{est-Pt-2-1}) or (\ref{est-Pt-2-2}) into (\ref{est-P1t-1})
yields that for $t\geq 0$, the following estimates hold.
For $k_m>k_{3}$ and $k_3+k_{d}\neq k_{m}$,
\begin{eqnarray*}
P_1(t)\!\!\!&<&\!\!\!P_1(0)e^{-(k_3+k_d)t}+\frac{\nu_{p}M(0)}{k_3+k_{d}-k_m}\left(e^{-k_mt}-e^{-(k_3+k_{d})t}\right)\\
     \!\!\!&&\!\!\!+\left(P(0)+\frac{\nu_{p}M(0)}{k_m-k_3}\right)e^{-k_3 t}+\frac{\nu_{m}\nu_{p}}{k_3k_{m}}.
\end{eqnarray*}
For $k_m>k_{3}$ and $k_3+k_{d}=k_{m}$,
\begin{eqnarray*}
P_1(t)<P_1(0)e^{-k_{m}t}+\nu_{p}M(0)te^{-k_{m}t}
+\left(P(0)+\frac{\nu_{p}M(0)}{k_d}\right)e^{-k_3 t}+\frac{\nu_{m}\nu_{p}}{k_3k_{m}}.
\end{eqnarray*}
For $k_3>k_{m}$,
\begin{eqnarray*}
P_1(t)<P_1(0)e^{-(k_3+k_d)t}+\frac{\nu_{p}M(0)}{k_3-k_m}e^{-k_mt}
+P(0)e^{-k_3 t}+\frac{\nu_{m}\nu_{p}}{k_3k_{m}}.
\end{eqnarray*}
For $k_3= k_{m}$,
\begin{eqnarray*}
P_1(t)<P_1(0)e^{-(k_3+k_d)t}+\left(P(0)+\frac{\nu_{p}M(0)}{k_d}(k_dt+e^{-k_{d}t}+1)\right)e^{-k_3 t}+\frac{\nu_{m}\nu_{p}}{(k_3)^{2}}.
\end{eqnarray*}
Hence we have
$$\lim_{t\to +\infty}dist((M(t),P(t),P_1(t)), \mathcal{A}_{+}^{1})=0.$$
Thus, the set $\mathcal{A}^{1}_{+}$ attracts the set $\mathcal{A}_{+}$ under the flow $\{\Psi_{t}: t\in \mathbb{R}\}$.

Note that for $M>\nu_{m}/k_{m}$ we have
$$
\frac{d M}{d t}
        <\frac{4\nu_m P_{c}^{2}}{4P_{c}^{2}+(P-P_1)^{2}}-\nu_m \leq 0,
$$
then the set $\mathcal{A}^{2}_{+}:=\{(M, P, P_1)\in \mathbb{R}^{3}: 0\leq M\leq \nu_{m}/k_{m}\}\cap \mathcal{A}_{+}$
is a positive invariant set of system (\ref{3D-model-2}).
Let the field vector of system (\ref{3D-model-2}) be restricted to the set $\mathcal{A}^{2}_{+}\cap \{P=\nu_{m}\nu_{p}/(k_3k_{m})\}$.
Then on this set we have
$$
\frac{(k_1-k_2)P_1+k_2P}{J_p+P}>0 \ \mbox{ and }\
\frac{d P}{d t}<\nu_{m}M-k_3P\leq  \frac{\nu_{m}\nu_p}{k_{m}}-\frac{\nu_{m}\nu_{p}}{k_{m}}=0.
$$
By the positive invariant property of the set $\mathcal{A}^{2}_{+}$,
we get that  the set $\mathcal{A}^{1}_{+}$ is also positive invariant.
Therefore, the proof is now complete.
\end{proof}

\subsection{A two-dimensional reduction}

In this section,
we reduce the three-dimensional circadian oscillator model (\ref{3D-model-2})
to a two-dimensional system on the persistent normally hyperbolic slow manifold,
under the assumption that the parameters $k_{a}$ and $k_{d}$ are large compared to other parameters.

Let  $\nu_m=\varepsilon k_d \widetilde{\nu}_m$, $k_m=\varepsilon k_d \widetilde{k}_m$,
$\nu_p=\varepsilon k_d \widetilde{\nu}_p$, $k_{i}=\varepsilon k_d \widetilde{k}_{i}$, $i=1,2,3$, and $k_{a}=Kk_{d}$,
where the parameter $\varepsilon$ satisfies $0<\varepsilon \leq \varepsilon_{0}\ll 1$ for some $\varepsilon_{0}>0$
from the assumption that the parameters $k_a$ and $k_d$ are sufficiently large compared to other rate constants.
By a time rescaling $t=\widetilde{\tau}/k_{d}$, system (\ref{3D-model-2}) becomes
\begin{subequations}
\label{3D-model-3}
\begin{align}
\frac{d M}{d \widetilde{\tau}} &= \varepsilon\left(\frac{4\widetilde{\nu}_m P_{c}^{2}}{4P_{c}^{2}+(P-P_1)^{2}}-\widetilde{k}_m M\right),
\label{3D-model-3-1}
\\
\frac{d P}{d \widetilde{\tau}} &= \varepsilon\left( \widetilde{\nu}_p M-\frac{(\widetilde{k}_1-\widetilde{k}_2)P_1+\widetilde{k}_2P}{J_p+P}-\widetilde{k}_3P\right),
\label{3D-model-3-2}
\\
\frac{d P_1}{d \widetilde{\tau}} &= \varepsilon\left( \widetilde{\nu}_p M-\frac{\widetilde{k}_1P_1}{J_p+P}-\widetilde{k}_3P_1\right)-2K P_1^{2}- P_1+P.
\label{3D-model-3-3}
\end{align}
\end{subequations}
Let $\{\widetilde{\Psi}_{\widetilde{\tau}}: \widetilde{\tau}\in \mathbb{R}\}$ denote the dynamical system generated by system (\ref{3D-model-3}).
The layer problem is obtained by setting $\varepsilon=0$ in system (\ref{3D-model-3}), that is,
\begin{subequations}
\label{3D-model-layer}
\begin{align}
\frac{d M}{d \widetilde{\tau}} &= 0,
\label{3D-model-layer-1}
\\
\frac{d P}{d \widetilde{\tau}} &= 0,
\label{3D-model-layer-2}
\\
\frac{d P_1}{d \widetilde{\tau}} &= -2K P_1^{2}- P_1+P.
\label{3D-model-layer-3}
\end{align}
\end{subequations}
Recall that $\mathcal{A}^{1}_{+}$ attracts $\mathcal{A}_{+}$ under the flow $\{\Psi_{t}: t\in \mathbb{R}\}$,
which is stated in Lemma \ref{lm-p-inv-2}.
Then we take a critical manifold $\mathcal{M}_{0}$ in the form
\begin{eqnarray*}
\ \mathcal{M}_{0}:=\left\{(M, P, P_1)\in \mathbb{R}^{3}:P_1=h(P),\
                 -\varrho_{1}\leq M\leq \frac{\nu_{m}}{k_{m}}+\varrho_{1},\
                 -\varrho_{1}\leq P \leq \frac{\nu_{m}\nu_{p}}{k_3k_{m}}+\varrho_{1} \right\},
\end{eqnarray*}
where the constant $\varrho_{1}$ is in the interval $(0,1/(8K))$ and  the function $h$ is in the form
\begin{eqnarray*}
h(P)=\frac{\sqrt{1+8KP}-1}{4K}, \ \  \ P\geq -\frac{1}{8K}.
\end{eqnarray*}
For each point $(M, P, P_1)$ in the set $\mathcal{M}_{0}$,
we observe that the linearized system at this point has two zero eigenvalues
and one nonzero eigenvalue $\mu=-\sqrt{1+8KP}<0$ for $-\varrho_{1}\leq P \leq \nu_{m}\nu_{p}/(k_3k_{m})+\varrho_{1}$,
whose eigenvector is $\xi=(0,0,1)$.
Then the critical manifold $\mathcal{M}_{0}$ is normally hyperbolic.
Let $\mathcal{W}^{s}_{0}(\mathcal{M}_{0})$ denote the stable manifold of $\mathcal{M}_{0}$ for system (\ref{3D-model-layer}).
Then the stable manifold $\mathcal{W}^{s}_{0}(\mathcal{M}_{0})$ is three-dimensional,
and $\mathcal{W}^{s}_{0}(\mathcal{M}_{0})$ transversally intersects with the plane $P_1=P_1^{*}$ for each $P_1^{*}\in \mathbb{R}$
at points with
\begin{eqnarray*}
-\varrho_{1}\leq M\leq \frac{\nu_{m}}{k_{m}}+\varrho_{1},\ \ \
-\varrho_{1}\leq P \leq \frac{\nu_{m}\nu_{p}}{k_3k_{m}}+\varrho_{1} \ \ \
\mbox{ and }\  P\neq 2K (P_1^{*})^{2}+P_1^{*}.
\end{eqnarray*}

By applying the theory of normally hyperbolic invariant manifolds
\cite{Fenichel-71,Fenichel-74,Fenichel-77,Fenichel-79,Jones-95},
we can obtain the following results on the local dynamics of system (\ref{3D-model-3})
with sufficiently small $\varepsilon$ near the critical manifold $\mathcal{M}_{0}$.
\begin{lemma}\label{lm-Fenichel}
Assume that  the parameter $\varepsilon$  in system (\ref{3D-model-3}) satisfies
$0<\varepsilon \leq \varepsilon_{0}$ for a sufficiently small $\varepsilon_{0}>0$.
Then for each $ r$ in $ (0, +\infty)$, the following statements hold:
\item{\bf (i)}
There exists a locally invariant manifold  $\mathcal{M}_{\varepsilon}$
which is $O(\varepsilon)$-close and diffeomorphic to the critical manifold $\mathcal{M}_{0}$.
Furthermore, there is a $C^{r}$ smooth function
$$q(M,P,\varepsilon)=h(P)+\varepsilon q_{1}(M,P)+O(\varepsilon^{2})$$
such that the manifold $\mathcal{M}_{\varepsilon}$ is given by
\begin{eqnarray*}
\mathcal{M}_{\varepsilon}:=\!\left\{\!(M, P, P_1)\in \mathbb{R}^{3}\!:P_1=q(M,P,\varepsilon),\,
                 -\varrho_{1}\leq M\leq \frac{\nu_{m}}{k_{m}}+\varrho_{1},\,
                 -\varrho_{1}\leq P \leq \frac{\nu_{m}\nu_{p}}{k_3k_{m}}+\varrho_{1}\! \right\},
\end{eqnarray*}
where $h(P)=(\sqrt{1+8KP}-1)/(4K)$ and  $q_{1}$ is given by (\ref{df-q-1}).

\item{\bf (ii)}
There exists, in some small neighborhood $\mathcal{U}$ of $\mathcal{M}_{0}$
which is independent of $\varepsilon$ with $0<\varepsilon\leq \varepsilon_{0} $,
a locally invariant stable manifold $\mathcal{W}^{s}_{\varepsilon}(\mathcal{M}_{\varepsilon})$ of the manifold $\mathcal{M}_{\varepsilon}$ which is $O(\varepsilon)$-close and diffeomorphic to $\mathcal{W}^{s}_{0}(\mathcal{M}_{0})$.
Furthermore, the locally invariant manifold $\mathcal{W}^{s}_{\varepsilon}(\mathcal{M}_{\varepsilon})$ is $C^{r}$ smooth in $M, P, P_1$ and $\varepsilon$.

\item{\bf (iii)}
For each $u_{\varepsilon}\in \mathcal{M}_{\varepsilon}$,
there exists a submanifold $\mathcal{W}^{s}_{\varepsilon}(u_{\varepsilon})$ of the manifold $\mathcal{W}^{s}_{\varepsilon}(\mathcal{M}_{\varepsilon})$
which is $O(\varepsilon)$-close and  diffeomorphic to $\mathcal{W}^{s}_{0}(u_{0})$.
Furthermore, the family $\{\mathcal{W}^{s}_{\varepsilon}(u_{\varepsilon}): u_{\varepsilon}\in \mathcal{M}_{\varepsilon}\}$
is invariant in the sense that
the manifold $\mathcal{W}^{s}_{\varepsilon}(u_{\varepsilon})$ is mapped by the time $\widetilde{\tau}$ flow to another submanifold
$\mathcal{W}^{s}_{\varepsilon}(u_{\varepsilon}\cdot \widetilde{\tau})$
whose base point is the image of the time $\widetilde{\tau}$ flow for $u_{\varepsilon}$.
In addition, if $u^{s}_{\varepsilon}\in \mathcal{W}^{s}_{\varepsilon}(\mathcal{M}_{\varepsilon})$,
then there is a point $u_{\varepsilon}\in \mathcal{M}_{\varepsilon}$
such that
$|\widetilde{\Psi}_{\widetilde{\tau}}u^{s}_{\varepsilon}-\widetilde{\Psi}_{\widetilde{\tau}}u_{\varepsilon}|
\leq \kappa_{s}e^{-\mu_{s}\widetilde{\tau}}$ for $\widetilde{\tau}\geq 0$
and some positive constants $\kappa_{s}$ and $\mu_{s}$.
\end{lemma}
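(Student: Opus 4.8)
The plan is to derive parts (i)--(iii) from Fenichel's invariant manifold theorems recalled in Section~\ref{sec-GSPT}, and then to compute the leading correction $q_1$ by expanding the invariance equation for $\mathcal{M}_\varepsilon$ in powers of $\varepsilon$. First I would check the hypotheses. The set $\mathcal{M}_0$ is a compact submanifold with boundary of the critical set of the layer problem (\ref{3D-model-layer}): it is the graph of $P_1 = h(P)$, which is $C^\infty$ on the rectangle $-\varrho_1 \le M \le \nu_m/k_m+\varrho_1$, $-\varrho_1 \le P \le \nu_m\nu_p/(k_3k_m)+\varrho_1$ because $\varrho_1 \in (0,1/(8K))$ forces $1+8KP > 0$ there. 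The only nontrivial normal eigenvalue was already computed before the statement, namely $\mu(P) = -\sqrt{1+8KP}$; it is negative and bounded away from the imaginary axis (by $\sqrt{1-8K\varrho_1} > 0$) uniformly on the compact range of $P$, so $\mathcal{M}_0$ is uniformly normally hyperbolic and \emph{attracting} (no unstable normal directions). Since the flow tangent to $\mathcal{M}_0$ in (\ref{3D-model-3}) is $O(\varepsilon)$ while the normal contraction rate is $O(1)$, the spectral gap needed for $C^r$ persistence holds once $\varepsilon_0 = \varepsilon_0(r)$ is small.

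Given these hypotheses, Theorem~9.1 of \cite{Fenichel-79} (see the discussion in Section~\ref{sec-GSPT}) together with the accompanying stable-manifold and fibration theorems supply, for each fixed $r \in (0,+\infty)$ and all $0 < \varepsilon \le \varepsilon_0(r)$: a locally invariant $C^r$ manifold $\mathcal{M}_\varepsilon$ that is $O(\varepsilon)$-close and diffeomorphic to $\mathcal{M}_0$; being $O(\varepsilon)$-close in the $C^1$ topology to a graph over $(M,P)$, it is itself such a graph $P_1 = q(M,P,\varepsilon)$ with $q(\cdot,\cdot,0) = h$, which is part (i) up to the explicit form of $q_1$. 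Only \emph{local} invariance is available because a trajectory can leave $\mathcal{M}_\varepsilon$ through the lateral faces of the rectangle. The stable-manifold theorem gives a $C^r$ locally invariant manifold $\mathcal{W}^s_\varepsilon(\mathcal{M}_\varepsilon)$ inside a neighborhood $\mathcal{U}$ of $\mathcal{M}_0$ that is uniform in $\varepsilon$ (precisely because the normal contraction rate is uniform), $O(\varepsilon)$-close and diffeomorphic to the three-dimensional manifold $\mathcal{W}^s_0(\mathcal{M}_0)$ described before the statement; this is part (ii). The Fenichel fibration of $\mathcal{W}^s_\varepsilon(\mathcal{M}_\varepsilon)$ over $\mathcal{M}_\varepsilon$ by $O(\varepsilon)$-perturbed fibers $\mathcal{W}^s_\varepsilon(u_\varepsilon)$, its flow-invariance, and the exponential estimate with any rate $\mu_s$ below, say, $\tfrac{1}{2}\sqrt{1-8K\varrho_1}$, give part (iii).

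To pin down $q_1$, I would write the invariance equation for $\mathcal{M}_\varepsilon$: a solution of (\ref{3D-model-3}) lying on $\mathcal{M}_\varepsilon$ satisfies $\dfrac{dP_1}{d\widetilde{\tau}} = \dfrac{\partial q}{\partial M}\dfrac{dM}{d\widetilde{\tau}} + \dfrac{\partial q}{\partial P}\dfrac{dP}{d\widetilde{\tau}}$ with $P_1 = q(M,P,\varepsilon)$. Substituting (\ref{3D-model-3-1})--(\ref{3D-model-3-3}) and inserting $q = h(P) + \varepsilon q_1(M,P) + O(\varepsilon^2)$, the $\varepsilon^0$ terms reproduce the defining relation $-2Kh^2 - h + P = 0$ of $h$, while (using $\partial q/\partial M = O(\varepsilon)$, $\partial q/\partial P = h'(P) + O(\varepsilon)$, and $dM/d\widetilde{\tau}, dP/d\widetilde{\tau} = O(\varepsilon)$) the $\varepsilon^1$ terms give the linear equation
\begin{equation*}
(4Kh(P)+1)\,q_1(M,P) = \left(\widetilde{\nu}_p M - \frac{\widetilde{k}_1 h(P)}{J_p+P} - \widetilde{k}_3 h(P)\right) - h'(P)\left(\widetilde{\nu}_p M - \frac{(\widetilde{k}_1-\widetilde{k}_2)h(P) + \widetilde{k}_2 P}{J_p+P} - \widetilde{k}_3 P\right).
\end{equation*}
Since $4Kh(P)+1 = \sqrt{1+8KP} = -\mu(P) \ne 0$ and $h'(P) = 1/\sqrt{1+8KP}$, this determines $q_1$ uniquely, and a short simplification using the identities $4Kh^2 + h = P$ and $4Kh + 1 = \sqrt{1+8KP}$ collapses the right-hand side to exactly (\ref{df-q-1}); higher coefficients, if wanted, come from the same recursion.

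The only points needing care are the boundary of $\mathcal{M}_0$ --- which is why every statement is phrased in terms of local invariance, and which forces one to keep the lateral faces of the rectangle outside the region where invariance is claimed --- and the $\varepsilon$-uniformity of the neighborhood $\mathcal{U}$ in (ii), which follows from the uniform spectral gap. Beyond that the argument is a direct application of Fenichel's theory to a uniformly attracting normally hyperbolic compact critical manifold, and the passage from the linear equation for $q_1$ to the closed form (\ref{df-q-1}) is a routine algebraic reduction.
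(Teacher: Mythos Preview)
Your proposal is correct and follows essentially the same approach as the paper: parts (ii) and (iii), and the existence/graph form in (i), are obtained by direct appeal to Fenichel's Theorem~9.1, and the explicit correction $q_1$ is computed by writing the invariance equation $dP_1/d\widetilde{\tau}=\partial_M q\,dM/d\widetilde{\tau}+\partial_P q\,dP/d\widetilde{\tau}$ on $\mathcal{M}_\varepsilon$, substituting $q=h+\varepsilon q_1+O(\varepsilon^2)$, and matching powers of $\varepsilon$. Your write-up is simply more explicit---you spell out the verification of compactness and uniform normal hyperbolicity, the linear equation $(4Kh+1)q_1=\cdots$, and the boundary/local-invariance caveats---whereas the paper compresses all of this into a couple of lines.
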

\begin{proof}
We only give the proof of the representation for the manifold $\mathcal{M}_{\varepsilon}$,
that is, the expression of the smooth function $q$ given in {\bf (i)},
the remaning statements are  due to\cite[Theorem 9.1]{Fenichel-79}.
By the invariance of the manifold $\mathcal{M}_{\varepsilon}$,
we have
\begin{eqnarray*}
\frac{d P_1}{d \widetilde{\tau}}=
     \frac{\partial q(M,P,\varepsilon)}{\partial M}\frac{d M}{d \widetilde{\tau}}
     +\frac{\partial q(M,P,\varepsilon)}{\partial P}\frac{d P}{d \widetilde{\tau}}.
\end{eqnarray*}
By substituting (\ref{3D-model-3-1}), (\ref{3D-model-3-2}) and $P_1=q(M,P,\varepsilon)$ into the above equality
and comparing coefficient of powers of $\varepsilon$,
we obtain
\begin{eqnarray*}
q(M,P,0)=\frac{\sqrt{1+8KP}-1}{4K}, \ \ \ \frac{\partial q}{\partial \varepsilon}(M,P,0)=q_{1}(M,P).
\end{eqnarray*}
Therefore, the proof is now complete.
\end{proof}

Following Lemma \ref{lm-Fenichel},
we further obtain a smaller attractor in $\mathcal{M}_{\varepsilon}$
and realize the reduction of three-dimensional system (\ref{3D-model-3}).
More precisely, we have the following results.

\begin{lemma}\label{prop-2D-reduc}
Assume that  the parameter $\varepsilon$ satisfies
$0<\varepsilon \leq \varepsilon_{0}$ for a sufficiently small $\varepsilon_{0}>0$.
Let the set $\widetilde{\mathcal{A}}_{\varepsilon}$ be defined by
$\widetilde{\mathcal{A}}_{\varepsilon}=\mathcal{A}^{1}_{+}\cap \mathcal{M}_{\varepsilon}$,
where $\mathcal{A}^{1}_{+}$ and $\mathcal{M}_{\varepsilon}$ are given, respectively,
in Lemmas \ref{lm-p-inv-2} and \ref{lm-Fenichel}.
Then  $\widetilde{\mathcal{A}}_{\varepsilon}$ is a positive invariant set of system (\ref{3D-model-3})
and  attracts the set $\mathcal{A}_{+}$ under the flow $\{\widetilde{\Psi}_{\widetilde{\tau}}: \widetilde{\tau}\in \mathbb{R}\}$.
Furthermore,
system (\ref{3D-model-3}) restricted to $\widetilde{\mathcal{A}}_{\varepsilon}$ is given by
\begin{subequations}
\label{2D-model-2}
\begin{align}
\frac{d M}{d \widetilde{\tau}} &=
    \varepsilon\left(\frac{4\widetilde{\nu}_m P_{c}^{2}}{4P_{c}^{2}+(P-h(P))^{2}}-\widetilde{k}_m M
    +\varepsilon\frac{8\widetilde{\nu}_m P_{c}^{2}q_{1}(M,P)(P-h(P))}{(4P_{c}^{2}+(P-h(P))^{2})^{2}}+O(\varepsilon^{2})\right),
\label{2D-model-2-1}
\\
\frac{d P}{d \widetilde{\tau}} &=
    \varepsilon\left( \widetilde{\nu}_p M-\frac{(\widetilde{k}_1-\widetilde{k}_2)h(P)+\widetilde{k}_2P}{J_p+P}-\widetilde{k}_3P
    +\varepsilon \frac{(\widetilde{k}_2-\widetilde{k}_1)q_{1}(M,P)}{J_p+P}+O(\varepsilon^{2})\right),
\label{2D-model-2-2}
\end{align}
\end{subequations}
where the function $q_1$ is given by (\ref{df-q-1})
and the function $h$ is defined by $h(P)=(\sqrt{1+8KP}-1)/(4K)$ for $0\leq P\leq \nu_{m}\nu_{p}/(k_3k_{m})$.
\end{lemma}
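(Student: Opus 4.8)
The statement packages three claims: that $\widetilde{\mathcal{A}}_{\varepsilon}=\mathcal{A}^{1}_{+}\cap\mathcal{M}_{\varepsilon}$ is positively invariant, that it attracts $\mathcal{A}_{+}$, and that the flow of (\ref{3D-model-3}) restricted to it is (\ref{2D-model-2}). The plan is to read off the first two from Lemmas \ref{lm-p-inv-2} and \ref{lm-Fenichel}, after noting that (\ref{3D-model-3}) is merely the time rescaling $t=\widetilde{\tau}/k_{d}$ of (\ref{3D-model-2}), so that the positive-invariance and attraction statements of Lemma \ref{lm-p-inv-2} transfer verbatim to the flow $\widetilde{\Psi}_{\widetilde{\tau}}$; and to obtain the third by inserting the graph $P_1=q(M,P,\varepsilon)$ of Lemma \ref{lm-Fenichel}(i) into the first two components of (\ref{3D-model-3}) and expanding in $\varepsilon$.

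For positive invariance: Lemma \ref{lm-p-inv-2} (rescaled) gives that $\mathcal{A}^{1}_{+}$ is positively invariant for (\ref{3D-model-3}), and Lemma \ref{lm-Fenichel}(i) gives that $\mathcal{M}_{\varepsilon}$ is locally invariant, so a trajectory lying on $\mathcal{M}_{\varepsilon}$ can leave $\mathcal{M}_{\varepsilon}$ only through its lateral boundary, that is, only if $M$ or $P$ reaches one of the endpoints $-\varrho_{1}$, $\nu_{m}/k_{m}+\varrho_{1}$, $\nu_{m}\nu_{p}/(k_3k_{m})+\varrho_{1}$. A trajectory starting in $\widetilde{\mathcal{A}}_{\varepsilon}$ remains in $\mathcal{A}^{1}_{+}$, where $0\le M\le \nu_{m}/k_{m}$ and $0\le P\le \nu_{m}\nu_{p}/(k_3k_{m})$, hence stays strictly inside that lateral boundary --- this is exactly what the margin $\varrho_{1}>0$ buys --- so it never leaves $\mathcal{M}_{\varepsilon}$ either, and therefore stays in $\widetilde{\mathcal{A}}_{\varepsilon}$. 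Compactness and nonemptiness of $\widetilde{\mathcal{A}}_{\varepsilon}$ are immediate, $\mathcal{A}^{1}_{+}$ being compact and $\mathcal{M}_{\varepsilon}$ a closed $C^{r}$ graph over the $(M,P)$-box.

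For attraction: by Lemma \ref{lm-p-inv-2} (rescaled) every forward orbit from $\mathcal{A}_{+}$ is attracted to the compact positively invariant box $\mathcal{A}^{1}_{+}$, so it suffices to start at $u\in\mathcal{A}^{1}_{+}$. On $\mathcal{A}^{1}_{+}$ the layer problem (\ref{3D-model-layer}) has the single stable equilibrium $P_1=h(P)$ with eigenvalue $\mu=-\sqrt{1+8KP}$ bounded away from $0$, while its only other equilibrium --- the negative root of $2KP_1^{2}+P_1-P=0$ --- stays a fixed distance below $\mathcal{A}^{1}_{+}$; hence the fast flow contracts $\mathcal{A}^{1}_{+}$ toward $\mathcal{M}_{0}$, and for $\varepsilon_{0}$ small the forward orbit of $u$ enters, after a uniformly bounded transient, the neighborhood $\mathcal{U}$ of Lemma \ref{lm-Fenichel}(ii), where it lies on $\mathcal{W}^{s}_{\varepsilon}(\mathcal{M}_{\varepsilon})$. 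Lemma \ref{lm-Fenichel}(iii) then supplies a base point $u_{\varepsilon}\in\mathcal{M}_{\varepsilon}$ with $|\widetilde{\Psi}_{\widetilde{\tau}}u-\widetilde{\Psi}_{\widetilde{\tau}}u_{\varepsilon}|\le \kappa_{s}e^{-\mu_{s}\widetilde{\tau}}$ for $\widetilde{\tau}$ large. Since $\widetilde{\Psi}_{\widetilde{\tau}}u$ stays in $\mathcal{A}^{1}_{+}$ and the two orbits converge, $\widetilde{\Psi}_{\widetilde{\tau}}u_{\varepsilon}$ approaches $\mathcal{A}^{1}_{+}$; and since $\mathcal{M}_{\varepsilon}$ is a Lipschitz graph over the $(M,P)$-box, a point of $\mathcal{M}_{\varepsilon}$ close to $\mathcal{A}^{1}_{+}$ is close to $\mathcal{A}^{1}_{+}\cap\mathcal{M}_{\varepsilon}=\widetilde{\mathcal{A}}_{\varepsilon}$, whence $dist(\widetilde{\Psi}_{\widetilde{\tau}}u,\widetilde{\mathcal{A}}_{\varepsilon})\to 0$. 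I expect this synthesis --- turning attraction to $\mathcal{A}^{1}_{+}$ and attraction to $\mathcal{M}_{\varepsilon}$ into attraction to their intersection, through the invariant fibration --- to be the main obstacle; the rest is bookkeeping on Lemmas \ref{lm-p-inv-2} and \ref{lm-Fenichel}.

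For the restricted equations: on $\mathcal{M}_{\varepsilon}$ one has $P_1=q(M,P,\varepsilon)=h(P)+\varepsilon q_{1}(M,P)+O(\varepsilon^{2})$ by Lemma \ref{lm-Fenichel}(i), and by local invariance the restricted flow is obtained by substituting this into (\ref{3D-model-3-1})--(\ref{3D-model-3-2}). Writing $P-q=(P-h(P))-\varepsilon q_{1}(M,P)+O(\varepsilon^{2})$ and expanding $(4P_{c}^{2}+(P-q)^{2})^{-1}$ and $((\widetilde{k}_1-\widetilde{k}_2)q+\widetilde{k}_2P)(J_p+P)^{-1}$ to first order in $\varepsilon$ produces precisely the two $O(\varepsilon)$ correction terms in (\ref{2D-model-2-1})--(\ref{2D-model-2-2}); this is a direct calculation with no real obstruction, the substantive content --- the formula for $q_{1}$ --- having already been established in the proof of Lemma \ref{lm-Fenichel}.
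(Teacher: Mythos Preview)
Your proposal is correct and follows essentially the same route as the paper: positive invariance from combining Lemma~\ref{lm-p-inv-2} (rescaled) with the local invariance of $\mathcal{M}_{\varepsilon}$; attraction by first reducing to $\mathcal{A}^{1}_{+}$, then using the fast dynamics to enter $\mathcal{U}$, then the fibration of Lemma~\ref{lm-Fenichel}(iii); and the reduced equations by direct substitution of $q(M,P,\varepsilon)$. If anything, you are more careful than the paper on two points: you make explicit that orbits on $\mathcal{M}_{\varepsilon}$ can escape only through the lateral boundary and that the $\varrho_{1}$-margin prevents this once the orbit is confined to $\mathcal{A}^{1}_{+}$, whereas the paper simply invokes ``the invariant property of the slow manifold''; and you spell out why exponential convergence to $\mathcal{M}_{\varepsilon}$ together with confinement in $\mathcal{A}^{1}_{+}$ forces convergence to their intersection, which the paper asserts in one line.
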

\begin{proof}
By Lemma \ref{lm-Fenichel} we obtain that for a sufficiently small $\varepsilon$,
the critical manifold $\mathcal{M}_{0}$ perturbs smoothly to the slow manifold $\mathcal{M}_{\varepsilon}$
and is  $O(\varepsilon)$-close to  $\mathcal{M}_{0}$ in the $C^{1}$ topology.
Since
\begin{eqnarray*}
\mathcal{A}^{1}_{+}\cap \mathcal{M}_{0}\!\!\!&=&\!\!\!
  \left\{(M,P,P_1)\in \mathbb{R}^{3}: P_1=h(P),\
        0\leq M\leq \frac{\nu_{m}}{k_{m}}, \
        0\leq P \leq \frac{\nu_{m}\nu_{p}}{k_3k_{m}}\right\},
\end{eqnarray*}
then $\widetilde{\mathcal{A}}_{\varepsilon}=\mathcal{A}^{1}_{+}\cap \mathcal{M}_{\varepsilon}$ is not empty
for sufficiently small $\varepsilon$.

For each $(M(0),P(0),P_1(0))\in \mathbb{R}^{3}$,
let $(M(\widetilde{\tau}),P(\widetilde{\tau}),P_1(\widetilde{\tau}))$ be the solution of system (\ref{3D-model-3}) with this initial value.
Assume that $(M(0),P(0),P_1(0))\in \widetilde{\mathcal{A}}_{\varepsilon}=\mathcal{A}^{1}_{+}\cap \mathcal{M}_{\varepsilon}$
and recall that system (\ref{3D-model-2}) is changed into system (\ref{3D-model-3}) by a time rescaling $t=\widetilde{\tau}/k_{d}$,
then by Lemma \ref{lm-p-inv-2} we have $(M(\widetilde{\tau}),P(\widetilde{\tau}),P_1(\widetilde{\tau}))\in \mathcal{A}^{1}_{+}$ for $\widetilde{\tau}\geq 0$,
together with the invariant property of the slow manifold $\mathcal{M}_{\varepsilon}$ from Lemma \ref{lm-Fenichel},
yields that $(M(\widetilde{\tau}),P(\widetilde{\tau}),P_1(\widetilde{\tau}))\in \mathcal{M}_{\varepsilon}\cap \mathcal{A}^{1}_{+}=\widetilde{\mathcal{A}}_{\varepsilon}$ for $\widetilde{\tau}\geq 0$.
Thus, we obtain that the set $\widetilde{\mathcal{A}}_{\varepsilon}$ is a positive invariant set of system (\ref{3D-model-3}).

To prove that the set $\widetilde{\mathcal{A}}_{\varepsilon}$ attracts the set $\mathcal{A}_{+}$
under the flow $\{\widetilde{\Psi}_{\widetilde{\tau}}: \widetilde{\tau}\in \mathbb{R}\}$,
by Lemma \ref{lm-p-inv-2} it suffices to prove
the set $\widetilde{\mathcal{A}}_{\varepsilon}$ attracts the set $\mathcal{A}^{1}_{+}$.
Recall that $\mathcal{A}^{1}_{+}$ is a compact positive invariant set of system (\ref{3D-model-3}),
then by (\ref{3D-model-3-3}) we obtain  that for a sufficiently small $\varepsilon_{0}$,
each orbit starting from $\mathcal{A}^{1}_{+}$ eventually enters the set $\mathcal{U}$ obtained in Lemma \ref{lm-Fenichel} {\bf (ii)}
in a finite time.
It follows from Lemma \ref{lm-Fenichel} {\bf (iii)}  that
along the stable manifold of $\mathcal{M}_{\varepsilon}$,
this orbit decays to the slow manifold $\mathcal{M}_{\varepsilon}$ at an exponential rate for $\widetilde{\tau}\geq 0$.
Consequently, it exponentially decays  to the set $\mathcal{A}^{1}_{+}\cap \mathcal{M}_{\varepsilon}=\widetilde{\mathcal{A}}_{\varepsilon}$.
This proves that the set $\widetilde{\mathcal{A}}_{\varepsilon}$ attracts the set $\mathcal{A}^{1}_{+}$.

By Lemma \ref{lm-Fenichel},
system (\ref{3D-model-3}) restricted to the slow manifold $\mathcal{M}_{\varepsilon}$ is obtained by substituting the function
$q(M,P,\varepsilon)=h(P)+\varepsilon q_{1}(M,P)+O(\varepsilon^{2})$ into (\ref{3D-model-3-1}) and $(\ref{3D-model-3-2})$.
Note that $\widetilde{\mathcal{A}}_{\varepsilon}\subset \mathcal{M}_{\varepsilon}$ is positive invariant under the flow of system (\ref{3D-model-3}),
then  system (\ref{2D-model-2}) can be otained by a direct computation.
Therefore, the proof is now complete.
\end{proof}

Based on the  theory of normally hyperbolic invariant manifolds,
we prove that all solutions with biological meaning of system (\ref{3D-model-3}) with sufficiently small $\varepsilon$
converge to $\widetilde{\mathcal{A}}_{\varepsilon}$ as time goes to infinity. See Figure \ref{Fig-reduced-manifold}.
\begin{figure}[!htbp]
  \centering
  \includegraphics[width=9.5cm]{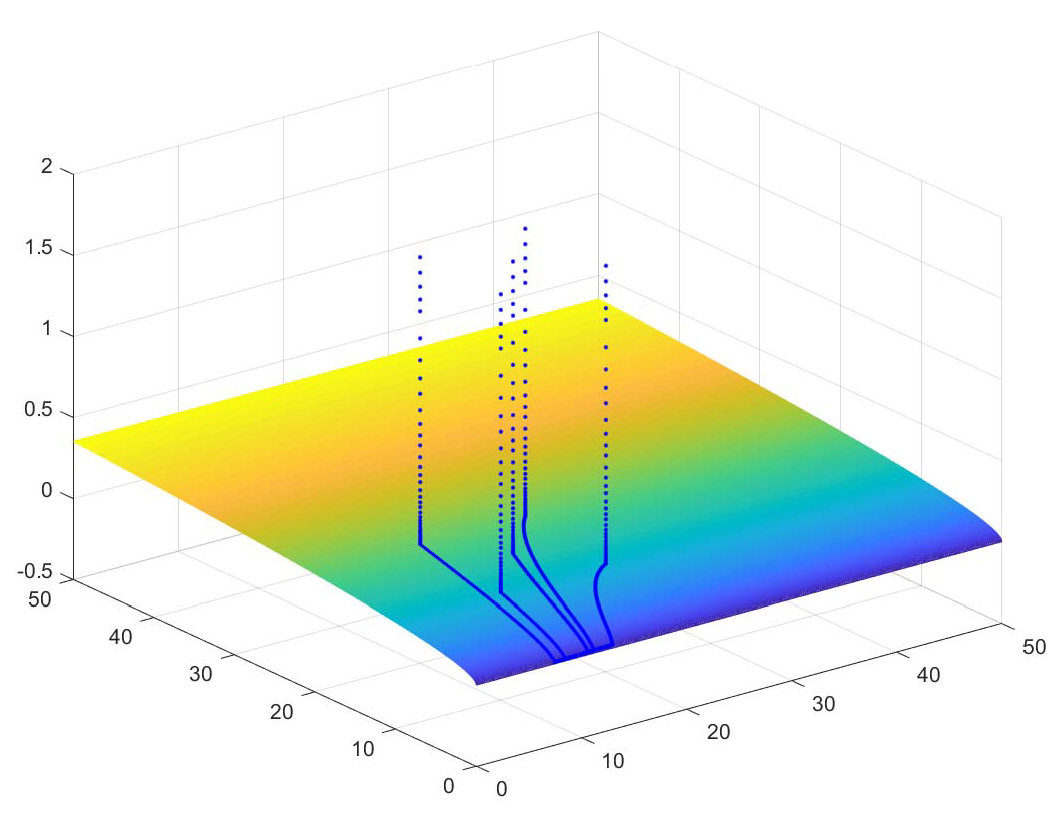}
  \caption{
  The attraction of the slow manifold.
  The surface is the  critical manifolds $\mathcal{M}_{0}$, which is the zeroth-order approximation of the slow manifold,
  and the discrete orbits,  respectively,  start from $(10,10,2)$, $(15,15,2)$, $(20,20,2)$, $(10,20,2)$ and $(20,10,2)$.
  Here $k_{a}=20000$, $k_{d}=100$ and the remaining parameters in (\ref{3D-model-2}) are chosen as in \cite[Table 1, p.2414]{Tyson-etal-99},
  that is, $v_{m}=1$, $k_{3}=k_{m}=0.1$, $v_{p}=0.5$, $k_{1}=10$, $k_{2}=0.03$,  $P_{c}=0.1$ and $J_{p}=0.05$.
  System (\ref{3D-model-3}) with $\widetilde{k}_{2}=1$  has small parameter $\varepsilon=0.0003$.
   }
  \label{Fig-reduced-manifold}
\end{figure}
Hence, the dynamics of the full system (\ref{3D-model-3}) can be obtained
by the studying  the dynamics on the slow manifolds $\mathcal{M}_{\varepsilon}$.
By Lemma \ref{prop-2D-reduc},
we give the main result on the reduction of three-dimensional system (\ref{3D-model-2}).

\noindent{\bf Proof of Theorem \ref{thm-2D-reduction}.}
Note that by taking  a time rescaling $\widetilde{\tau}=k_{d}t$,
system (\ref{3D-model-3}) is changed into system (\ref{3D-model-2}).
Therefore, by Lemma \ref{prop-2D-reduc} the proof for this theorem is finished.
\quad\quad$\Box$

\section{Concluding remarks}

We have considered a three-dimensional circadian rhythm model based on
dimerization and proteolysis of PER and TIM proteins in Drosophila.
Under the assumption that the rates $k_{a}$ and $k_{d}$ are sufficiently large compared to other rate parameters,
we reduce the three-dimensional system to a simpler two-dimensional system via GSPT.
To realize the reduction,
we first establish the existence of a compact attractor.
Then based on {\it Fenichel's Theorem} \cite[Theorem 9.1]{Fenichel-79},
we obtain the reduced system,
which is the restriction of the full system on the persistent normally hyperbolic slow manifold.
The approach in this paper can be also used to simplify many high-dimensional systems
modeling the biological, chemical and physical phenomena.

To analyze the circadian oscillations in the three-dimensional circadian oscillator model,
it is useful to  examine  the dynamics of the reduced system (\ref{2D-model-3})
in the view of the regular perturbation of the zeroth-order approximation.
In particular, it is interesting to investigate the zeroth-order approximation (\ref{2D-model-1}) of the reduced system,
which is equivalent to a Li\'enard-like equation
\begin{subequations}
\label{2D-model-12}
\begin{align}
\frac{d x}{d t} &= y-\left((\delta+1)(x^{2}+2x)+\frac{b_{2}x^{2}+2b_{1}x}{x^{2}+2x+a}\right),
\\
\frac{d y}{d t} &= 2\delta(x+1)\left(\frac{v}{x^{4}+c}-\frac{b_{2}x^{2}+2b_{1}x}{x^{2}+2x+a}-x^{2}-2x\right),
\end{align}
\end{subequations}
where the positive parameters $a, b_{1}, b_{2}, c, \delta, v$ are given by
\begin{eqnarray*}
a=8J_{P}K,\ \
b_{1}=\frac{8k_{2}K}{k_3},\ \
b_{2}=\frac{8k_{1}K}{k_3},\ \
c=256K^{2}P_{c}^{2}, \ \
\delta=\frac{k_{m}}{k_{3}},\ \
v=\frac{2048\nu_{m}\nu_{p}P_{c}^{2}K^{3}}{k_3k_m}.
\end{eqnarray*}
Then the results on the limit cycles of Li\'enard equations (see, for instance, \cite{Dumortieretal06,ZZF-etal})
can be applied to obtain the global dynamics of the Li\'enard-like equation (\ref{2D-model-12}).
It is also important to investigate the difference between the orginal system (\ref{3D-model-1})
and the reduced system (\ref{2D-model-3}).

{\footnotesize

}

\begin{thebibliography}{99}

\bibitem{Boieetal-16} S. Boie, V. Kirk, J. Sneyd, M. Wechselberger,
        Effects of quasi-steady-state reduction on biophysical models with oscillations,
        {\it J. Theoret. Biol.} {\bf 393} (2016), 16--31.

\bibitem{Chow-Hale-82} S.-N. Chow, J. K. Hale,
     {\it Methods of Bifurcation Theory}, Springer, New York, 1982.

\bibitem{Deng-89} B. Deng,
     The Sil'nikov problem, exponential expansion, strong $\lambda$-lemma, $C^{1}$ linearization, and homoclinic bifurcation,
     {\it J. Differential Equations} {\bf 79} (1989), 189--231.

\bibitem{Deng-03} B. Deng, G. Hines,
        Food chain chaos due to transcritical point,
        {\it Chaos} {\bf 13} (2003), 578--585.

\bibitem{DuLiLi-18} Z. Du, J. Li, X. Li,
     Then existence of solitary wave solutions of delayed Camassa-Holm equation via a geometric approach,
     {\it J. Funct. Anal.} {\bf 275} (2018), 988--1007.

\bibitem{Dumortieretal-Roussarie-96}  F. Dumortier, R. Roussarie,
    {\it Canard Cycles and Center Manifolds}, Mem. Amer. Math. Soc. {\bf 577}, Providence, 1996.

\bibitem{Dumortieretal06}  F. Dumortier, J. Llibre, J.  Art\'es,
    {\it Qualitative Theory of Planar Differential Systems}, Springer-Verlag, Berlin, 2006.

\bibitem{Dunlap-99} J. Dunlap,
     Molecular bases for circadian clocks,
     {\it Cell} {\bf 96} (1999), 271-290.

\bibitem{Fenichel-71} N. Fenichel,
    Persistence and smoothness of invariant manifolds for flows,
    {\it Indiana Univ. Math. J.} {\bf 21} (1971), 193--226.

\bibitem{Fenichel-74} N. Fenichel,
    Asymptotic stability with rate conditions,
    {\it Indiana Univ. Math. J.} {\bf 23} (1974), 1109--1137.

\bibitem{Fenichel-77} N. Fenichel,
    Asymptotic stability with rate conditions. II,
    {\it Indiana Univ. Math. J.} {\bf 26} (1977), 81--93.

\bibitem{Fenichel-79} N. Fenichel,
    Geometric singular perturbation theory for ordinary differential equations,
    {\it J. Differential Equations} {\bf 31} (1979), 53--98.

\bibitem{Forger-17} D. Forger,
    {\it Biological Clocks, Rhythms, and Oscillations},
    MIT Press, Cambridge, MA, 2017.

\bibitem{Goeke-Walcher-Zerz-15} A. Goeke, S. Walcher, E. Zerz,
    Determining ``small parameters" for quasi-steady state,
    {\it J. Differential Equations} {\bf 259} (2015), 1149--1180.

\bibitem{Gonze-11} D. Gonze,
   Modeling circadian clocks: From equations to oscillations,
   {\it Cent. Eur. J. Bio.} {\bf 6} (2011), 699-711.

\bibitem{Goussis-Najm-06} D. Goussis, H. Najm,
      Model reduction and physical understanding of slowly oscillating processes: the circadian cycle,
      {\it Multiscale Model. Simul.} {\bf 5} (2006), 1297--1332.

\bibitem{Hek-2010} G. Hek,
    Geometric singular perturbation theory in biological practice,
    {\it J. Math. Biol.} {\bf 60} (2010), 347--386.

\bibitem{Jones-95} C. K. R. T. Jones,
    {\it Geometric Singular Perturbation Theory}, in Dynamical systems,
    Lecture Notes in Math. {\bf 1609}, Springer, Berlin, 1995, pp. 44--118.

\bibitem{Jones-96}  C. K. R. T. Jones, T. Kaper, N. Kopell,
     Tracking invariant manifolds up to exponentially small errors,
     {\it SIAM J. Math. Anal.} {\bf 27} (1996), 558--577.



\bibitem{Kaper-02} H. Kaper, T. Kaper,
    Asymptotic analysis of two reduction methods of systems of chemical reactions,
    {\it Phy. D} {\bf 165} (2002), 66--93.

\bibitem{Keener-Sneyd-98} J. Keener, J. Sneyd,
        {\it Mathematical Physiology},
        Int. Appl. Math. {\bf 8}, Springer-Verlag, New York, 1998.

\bibitem{Kosiuk-Szmolyan-16} I. Kosiuk, P. Szmolyan,
     Geometric analysis of the Goldbeter minimal model for the embryonic cell cycle,
     {\it J. Math. Biol.} {\bf 72} (2016), 1337--1368.

\bibitem{Krupa-Szmolyan-01SIMA} M. Krupa, P. Szmolyan,
     Extending geometric singular perturbation theory to nonhyperbolic points¡ªfold and canard points in two dimensions,
     {\it SIAM J. Math. Anal.} {\bf 2} (2001), 286--314.

\bibitem{Krupa-Szmolyan-01JDE} M. Krupa, P. Szmolyan,
     Relaxation oscillation and canard explosion,
     {\it J. Differential Equations} {\bf 174} (2001), 312--368.

\bibitem{Kuehn-15}  C. Kuehn,
    {\it Multiple Time Scale Dynamics},
    Appl. Math. Sci. {\bf 191}, Springer, Swizerland, 2015.

\bibitem{Li-Zhu-13} C. Li, H. Zhu,
   Canard cycles for predator-prey systems with Holling types of functional response,
   {\it J. Differential Equations} {\bf 254} (2013), 879--910.

\bibitem{Liu-00} W. Liu,
     Exchange lemmas for singular perturbation problems with certain turning points,
     {\it J. Differential Equations} {\bf 67} (2000), 134--180.

\bibitem{Maas-Pope} U. Maas, S. Pope,
     Simplifying chemical kinetics: Intrinsic low dimensional manifolds in composition space,
     {\it Combust. Flame} {\bf 88} (1992), 239--264.

\bibitem{McMillen-etal-02} D. McMillen, N. Kopell, J. Hasty, J. Collins,
    Synchronizing genetic relaxation oscillators by intercell signaling,
    {\it Proc. Natl. Acad. Sci. USA} {\bf 99} (2002), 679--684.


\bibitem{Rubin-Terman} J. Rubin, D. Terman,
      Geometric Singular Perturbation Analysis of Neuronal Dynamics,
      in {\it Handbook of dynamical systems}, Vol. {\bf 2}, 93--146, North-Holland, Amsterdam, 2002.

\bibitem{Schecter1-08} S. Schecter,
     Exchange lemmas 1: Deng's lemma, {\it J. Differenital Equations} {\bf 245} (2008), 392--410.

\bibitem{Schecter2-08} S. Schecter,
     Exchange lemmas 2: General exchange lemma, {\it J. Differenital Equations} {\bf 245} (2008), 411--441.


\bibitem{Tyson-etal-99} J. Tyson, C. Hong, C. Thron, B. Novak,
    A simple model of circadian rhythms based on dimerization and proteolysis of PER and TIM,
    {\it Biophys. J.} {\bf 77} (1999), 2411--2417.

\bibitem{Wiggins-94} S. Wiggins,
         {\it Normally hyperbolic invariant manifolds in dynamical systems},
         Appl. Math. Sci. {\bf 105}, Springer-Verlag, New York, 1994.

\bibitem{ZZF-etal}  Z. Zhang, T. Ding, W. Huang, Z. Dong,
    {\it Qualitative Theory of Differential Equations},
    Transl. Math. Monographs {\bf 101}, Amer. Math. Soc., Providence, 1992.

\end{thebibliography}
\end{document}